\keywords{2--calibrated foliation, Lefschetz hyperplane theorem}
\subjclass[2010]{Primary: 57R30. Secondary: 53D17}
\newcommand{\length}{\operatorname{length}}
\newcommand{\cF}{\mathcal{F}}
\newcommand{\cH}{\mathcal{H}}
\newcommand{\cL}{\mathcal{L}}
\newcommand{\w}{\omega}
\newcommand{\R}{\mathbb{R}}
\newcommand{\Z}{\mathbb{Z}}
\newtheorem{proposition}{{\bf Proposition}}
\newtheorem{lemma}{{\bf Lemma}}
\newtheorem{theorem}{{\bf Theorem}}
\newtheorem{definition}{ {\bf Definition}}
\begin{document}
\author{David Mart\'inez Torres}
\address{Departamento de Matem\'atica, PUC-Rio, R. Mq. S. Vicente 225, Rio de Janeiro 22451-900, Brazil}
\email{dfmtorres@gmail.com}

\author{\'Alvaro del Pino}
\address{Universidad Aut\'onoma de Madrid and Instituto de Ciencias Matem\'aticas -- CSIC.
C. Nicol\'as Cabrera, 13--15, 28049, Madrid, Spain.} 
\email{alvaro.delpino@icmat.es}

\author{Francisco Presas}
\address{Instituto de Ciencias Matem\'aticas -- CSIC.
C. Nicol\'as Cabrera, 13--15, 28049, Madrid, Spain. }
\email{fpresas@icmat.es}

\title{The foliated Lefschetz hyperplane theorem}

\begin{abstract} A foliation $(M,\cF)$ is said to be $2$--calibrated if it admits a closed 2-form $\w$
making each leaf symplectic. By using approximately holomorphic techniques, a sequence $W_k$ of $2$--calibrated submanifolds
of codimension--$2$ can be found for $(M, \cF, \w)$. Our main result says that the Lefschetz hyperplane theorem holds
for the pairs $(F, F \cap W_k)$, with $F$ any leaf of $\cF$. This is applied to draw important consequences on the transverse geometry
of such foliations.
 \end{abstract}

\maketitle

\section{Introduction and statement of the main result}

A foliation $\cF$ by surfaces on a $3$--dimensional closed manifold $M$ is called taut if for every leaf there exists a loop through it that is everywhere transverse to $\cF$. This topological definition is equivalent to the following differential geometric one: there exists a closed 2-form inducing an area form on each leaf \cite{Su79}. Tautness implies strong topological restrictions on the pair $(M, \cF)$: by the work of Novikov we know that the fundamental group of any leaf injects into the fundamental group of the ambient manifold, and that every loop $C\pitchfork \cF$ must be non--trivial in homotopy.

The straightforward generalization of tautness to arbitrary dimension requires the existence, through any leaf, of a loop everywhere transverse to the foliation. However, in deep contrast to the $3$--dimensional case, these objects are quite flexible, as shown by the $h$--principle proved by Meigniez \cite{Me12}. In \cite{IM04a}, the first author proposed the following alternative generalization of taut foliations to higher dimensions:

\begin{definition} \label{def:main} A codimension--$1$ foliation $\cF^{2n}$ of $M^{2n+1}$ is said to admit a 2-calibration if there exists a closed $2$-form $\omega$ such
that the restriction of $\omega^n$ to the leaves of $\cF$ is nowhere vanishing. A triple $(M,\cF,\w)$, where $\w$ is a 2-calibration for $\cF$, is referred to as
a 2-calibrated foliation.
\end{definition}

\begin{definition}
A submanifold $W\hookrightarrow (M,\cF,\w)$ is a \emph{2--calibrated submanifold} if it is everywhere transverse to $\cF$ and it intersects each leaf of $\cF$ in a symplectic submanifold w.r.t. $\w$.
\end{definition}

As in the symplectic and contact settings, Donaldson's approximately holomorphic techniques \cite{Do96} can be applied to study 2-calibrated foliations.
In particular, they can be used for the construction of \textsl{2--calibrated divisors}:

\begin{proposition} \cite[Corollary 1.2]{IM04a}
Let $(M^{2n+1}, \cF; \w)$ be a $2$--calibrated foliation on a closed manifold with $\w$ of integral class. Then, for any integer $k$ large enough, there are $2$--calibrated submanifolds $W_k^{2n-1}$ representing the Poincar\'e dual of $[k\w]$.

Additionally, the maps
\[ i_*: \pi_j(W_k) \to \pi_j(M) \]
\[ i_*: H_j(W_k, \Z) \to H_j(M, \Z) \]
are isomorphisms for $j<n-1$ and surjections for $j=n-1$.
\end{proposition}

The submanifolds $W_k$ in the proposition will be called \textsl{Donaldson divisors}. The second part of the statement is the \textsl{$2$--calibrated Lefschetz hyperplane theorem}: much like in the projective and the symplectic cases, the divisors recover some of the topology of the ambient. The purpose of this note is to prove the following analogous result:

\begin{theorem}\label{thm:main}
 Let $(M^{2n+1},\cF,\w)$ be a $2$--calibrated foliation on a closed manifold. Let $W$ be a Donaldson divisor of dimension $2n-1$. Then, for every leaf $F$ of $\cF$ it holds that\footnote{For $A\subset B$, we have $\pi_0(B, A)=\pi_0(B)/\pi_0(A)$. From the definition, this extends the long exact sequence for the pair to the $\pi_0$-level, see  \cite{Hat} pag. 476.}
 \[\pi_k(F,F\cap W)=\{1\},\,0\leq k \leq n-1.\]
\end{theorem}

This result says that, despite the fact that a leaf $F$ of $\cF$ might be non--compact,
the symplectic Lefschetz hyperplane theorem holds for the pair $(F, F \cap W_k)$. It should
be remarked that the case of $\pi_0$ was already proved in \cite{IM04a,Ma12}, where the question was tackled constructing $2$--calibrated Lefschetz pencils.
In fact, it is clear that the relative $\pi_1$ can be computed out of \cite{Ma12}. 
The method of proof in this note is new, not an adaptation of the Lefschetz pencil techniques, and yields a much shorter and simpler proof for the $\pi_0$ and the  $\pi_1$ case as well.

The following theorem states an important consequence of Theorem \ref{thm:main}. 

\begin{theorem} \label{thm:applications}
Let $(M^{2n+1}, \cF, \w)$ be a $2$--calibrated foliation on a closed manifold.
Then there exists a closed 3-dimensional $2$--calibrated submanifold $(W,\cF_W = W \cap \cF, \w|_W) \hookrightarrow M$ satisfying 
the following equivalent properties:
\begin{enumerate}
\item[(i)] the map between holonomy groupoids induced by the inclusion 
\[ \iota\colon \mathrm{Hol}(\cF_W)\rightarrow\mathrm{Hol}(\cF) \]
is an essential equivalence;
\item[(ii)] any total transversal $T$  for $(W,\cF_W)$ is also a total transversal for $(M,\cF)$,
and the holonomy pseudogroups $\cH(\cF,T)$ and $\cH(\cF_W,T)$, induced on $T$ by $\cF$ and $\cF_W$, respectively, coincide.
\end{enumerate}
\end{theorem}
\begin{proof}
 This follows by first observing that, if $W$ is a submanifold transverse to $\cF$, 
  then, for either condition (i) or (ii) to hold (see \cite{MM03} for background material on essential equivalences and holonomy groupoids),
  it suffices that for each leaf $F\in \cF$
 \[\pi_0(F,F\cap W)=\pi_1(F,F\cap W)=\{1\}.\]

Theorem \ref{thm:main} can be applied as long as $n>1$. Doing so iteratively yields a descending chain of Donaldson divisors $M^{2n+1} = W_0 \supset W^{2n-1}_1 \supset \cdots \supset W^3_{n-1}$ satisfying 
\[ \pi_0(F \cap W^{2(n-k)+1}_k, F\cap  W^{2(n-k)-1}_{k+1}) = \{1\}, \]
and
\[ \pi_1(F \cap W^{2(n-k)+1}_k, F\cap  W^{2(n-k)-1}_{k+1}) = \{1\}. \]
This proves the claim. 
\end{proof}

Note that a 3-dimensional 2-calibrated submanifold as in Theorem \ref{thm:applications} is a classical 3-dimensional taut foliation. The map $\iota\colon \mathrm{Hol}(\cF_W)\rightarrow\mathrm{Hol}(\cF)$ being an essential equivalence implies not just that the map induced on leaf spaces $W/\cF_W\to M/\cF$ is a homeomorphism (c.f. \cite{Ma12}), but that both foliations have the same \textsl{transverse geometry}.

To spell this out more precisely, this implies in particular that:
\begin{itemize}
 \item The homeomorphism on leaf spaces preserves the growth type of the leaves \cite{Ha02}.
 \item There is a bijection between the \textsl{transverse geometric structures} on $(W,\cF_W)$ and those on $(M,\cF$). These are, for instance: holonomy invariant 
 transverse (Radon) measures, Riemannian metrics -- in general, structures defined by (invariant) sheaves over the Haefliger groupoid $\Gamma^1_\infty$ --, and real analytic
 structures (i.e. reductions to $\Gamma^1_{\omega}$).
 \item There is an isomorphism between  the periodic, Hochschild and periodic cyclic homologies of the convolution algebra of the holonomy groupoids \cite{CM}.
\end{itemize}

\textbf{Acknowledgments.} The authors are grateful to M. Crainic and P. Frejlich for their valuable suggestions.
The present work is part of the authors activities within CAST, a Research Network Program of the European Science Foundation. 
The first author acknowledges partial support of the FCT 2007 Ciencia Program and ERC Starting Grant no. 279729.
The second and third authors are supported by the Spanish National Research Project MTM2013---42135 and by the ICMAT Severo Ochoa grant SEV-2015-0554.

\section{Ingredients of the proof}

Our proof of Theorem \ref{thm:main} follows Donaldson's proof of the Lefschetz hyperplane theorem for approximately holomorphic divisors. His proof followed the Andreotti-Frankel proof in the affine/projective case: in the complement of a divisor,  the modulus of its defining approximately holomorphic section can be regarded, after a small perturbation, as a Morse function with critical points of index at least $n$, which implies that the ambient manifold is obtained from the divisor by attaching handles of index at least $n$. It readily follows that the relative homology and homotopy groups of degree less than $n$ vanish. 

In the foliated case however, the critical points of this function come in $\mathbb{S}^1$ families and a non--compact leaf will, in general, have infinitely many critical points. Hence, the relative homotopy type of a leaf with respect to the divisor is not readily understood. 

We shall first review the essentials of the approximately holomorphic machinery that we will need for the proof of Theorem \ref{thm:main}. Then we will discuss some conditions for Morse functions in open manifolds that will guarantee a nice behavior for their gradient flow.

\subsection{The approximately holomorphic theory for 2-calibrated foliations} \label{sec:ah}

Let $M^{2n+1}$ be a closed manifold endowed with a 2-calibrated foliation $(\cF,\w)$. After a small perturbation, 
we may assume without loss of generality that $[\w]$ is a rational class; by scaling the class, we may also assume
that it is integral. We let $\mathcal{\cL}\rightarrow M$ be the pre--quantum line bundle associated to $\w$; 
this is a Hermitian line bundle with a compatible connection $\nabla$ whose curvature is $-2\pi i\w$.

We let $\nabla^\cF$ denote the component of $\nabla$ tangential to $\cF$.
After choosing an almost complex structure $J$ compatible with $\w$, the tangential connection can be further
decomposed into its complex linear and antilinear parts, yielding $\nabla^\cF = \partial + \bar\partial$.

According to \cite{IM04a}, Corollary 1.2, upon choosing the almost complex structure $J$, it is
possible to construct a family $s_k: M \to \cL^k$ of sections of the $k$-th tensor powers of $\cL$,
for $k$ large enough, such that $W_k:=s_k^{-1}(0)$ are closed, 2-calibrated submanifolds of codimension two.

To state the conditions that are required for the sequence $s_k$,
we fix a metric $g$ on $M$ which over the leaves satisfies $g=\w(\cdot,J\cdot)$. 
Further, we define a family of scaled metrics $g_k = kg$.
\begin{definition}  \hfill
\begin{enumerate}
\item A sequence of sections $s_k: M \to \cL^k$ is said to be approximately holomorphic if there is a universal constant $C > 0$ such that:
\[ |s_k|_{g_k}, |\nabla s_k|_{g_k} < C ; \qquad |\bar\partial s_k|_{g_k}, |\nabla \bar\partial s_k|_{g_k} < Ck^{-1/2}, \]
for $k$ large enough.

\item A sequence of sections $s_k: M \to \cL^k$ is said to be $\nu$-transverse to zero along
the foliation $\cF$ if at any point either $|s_k|_{g_k} \geq \nu$ or $|\nabla^\cF s_k|_{g_k} \geq \nu$.
\end{enumerate}
\end{definition}

To every such an approximately--holomorphic transverse to zero sequence $s_k$ one associates a sequence
of functions $f_k: M \setminus W_k \to \mathbb{R}$ by $f_k = \log|s_k|^2$. The Lefschetz hyperplane theorem 
for Donaldson-type submanifolds (\cite{Do96,IM04a}) states:
\begin{proposition} \label{propo:index}
Fixing a leaf $F$, the function $f_k: F \setminus (W_k \cap F) \to \mathbb{R}$, which might not be Morse,
has only critical points of index at least $n$.
\end{proposition}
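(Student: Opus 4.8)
The plan is to prove the statement pointwise at each critical point: we show that the Hessian of $f_k|_F$ there has at least $n$ negative directions. This is the leafwise version of the Hessian computation underlying the Lefschetz hyperplane theorem of \cite{Do96,IM04a}; the observation that makes it work is that the estimates it relies on — approximate holomorphicity and $\nu$-transversality of $s_k$ — are assumed uniformly on the closed manifold $M$ and therefore hold, with the same constants, along every leaf $F$, so the whole argument localizes to $F$ and the non-compactness of the leaf plays no role at this stage. Fix a leaf $F$, of real dimension $2n$, and a point $p\in F\setminus(W_k\cap F)$ with $d^\cF f_k(p)=0$; then $p$ is also a critical point of $|s_k|^2|_F$. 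The first step is to record a uniform lower bound on $|s_k|_{g_k}$ at such points: where $|s_k|_{g_k}<\nu$ the $\nu$-transversality forces the leafwise $(1,0)$-part $\partial^\cF f_k$ to satisfy $|\partial^\cF f_k|_{g_k}\ge c>0$ (the usual estimate showing that $f_k$ behaves like $\log(\mathrm{dist}^2)$ near $W_k$; cf.\ \cite{Do96,IM04a}), so that $|s_k|_{g_k}(p)\ge\nu'$ for a constant $\nu'>0$ independent of $F$ and of $p$. This bound is what controls the powers of $|s_k|$ in the denominators of the second derivatives of $\log|s_k|^2$.

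Next I would pass to the rescaled metric $g_k=kg$ and choose, on a $g_k$-unit ball about $p$, approximately holomorphic Darboux coordinates $z$ for $(F,\w|_F,J|_F)$ together with a local frame $e_k$ of $\cL^k|_F$ with $|e_k|^2=e^{-\varphi_k}$, $\varphi_k(z)=\pi k|z|^2+O(|z|^3)$ and $\nabla e_k(p)=0$, these being the standard Donaldson normalizations (with bounds uniform in $k$). In them $f_k|_F=\log|\s_k|^2-\varphi_k$, where $\s_k$ represents $s_k$ and the hypotheses read $|\bar\partial\s_k|_{g_k},|\nabla\bar\partial\s_k|_{g_k}\le Ck^{-1/2}$ and $|\nabla^{\le2}\s_k|_{g_k}\le C$. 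I then split the real Hessian $H:=\mathrm{Hess}_p(f_k|_F)$, a symmetric form on $T_pF\cong\C^n$, into its $J$-invariant part $L$ and its $J$-anti-invariant part $Q$. For $L$: since $d^\cF f_k(p)=0$ and $\varphi_k$ is normalized so that $\mathrm{Hess}_p\varphi_k=2\pi g_k$ up to $O(k^{-1/2})$, one has $L=-2\pi g_k+(\text{$J$-invariant part of }\mathrm{Hess}_p\log|\s_k|^2)+O(k^{-1/2})$; at the critical point the leading terms of $\partial\bar\partial\log|\s_k|^2$ — the ones that survive for a genuinely holomorphic $\s_k$, for which $\partial\bar\partial\log|\s_k|^2\equiv0$ — cancel, and what is left involves only $\bar\partial\s_k$ and $\nabla\bar\partial\s_k$ divided by powers of $|\s_k|_{g_k}\ge\nu'$, hence is $O(k^{-1/2})$; so $L=-2\pi g_k+O(k^{-1/2})$, negative definite for $k$ large. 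For $Q$: a symmetric form with $Q(Jv,Jw)=-Q(v,w)$ has a $g_k$-self-adjoint representative that anticommutes with $J$, so its nonzero eigenvalues come in pairs $\pm\lambda$; in particular at most $n$ of its eigenvalues are positive.

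The conclusion is then immediate. Let $U\subseteq T_pF$ be the span of the eigenvectors of $Q$ with nonpositive eigenvalue; by the previous remark $\dim U\ge n$. For each $u\in U\setminus\{0\}$ we have $Q(u,u)\le0$ and $L(u,u)<0$, hence $H(u,u)=L(u,u)+Q(u,u)<0$. Thus $H$ is negative definite on a subspace of dimension $\ge n$, so the index of $p$ (understood as the dimension of a maximal negative-definite subspace, no Morse assumption being used) is at least $n$. As $F$ was arbitrary, this proves the proposition.

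I expect the main obstacle to be the first two steps — namely the uniform lower bound $|s_k|_{g_k}(p)\ge\nu'$ at critical points, and then the bookkeeping verifying that, once this bound is in force, every correction to the model identities $\mathrm{Hess}_p(f_k|_F)=L+Q$ and $L=-2\pi g_k$ is $O(k^{-1/2})$ in the $g_k$-metric and hence harmless. The two structural inputs — the strict plurisuperharmonicity $L<0$ of $\log|s_k|^2$, which comes from the positivity of $\w$ through the curvature $-2\pi ik\,\w$ of $\cL^k$, and the $(n,n)$-signature constraint for $J$-anti-invariant symmetric forms — are elementary, and with them the argument is word-for-word the compact one.
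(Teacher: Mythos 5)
Your proof is correct, and it is essentially the Andreotti--Frankel/Donaldson index computation. It is worth noting that the paper does not actually prove Proposition~\ref{propo:index}: it states it as the Lefschetz hyperplane theorem for Donaldson-type submanifolds and cites \cite{Do96,IM04a}. The paper's genuinely new contribution in this section is not the index bound but the passage from the index bound to relative homotopy vanishing on a possibly noncompact leaf (Lemmata~\ref{lem:dich}, \ref{lem:neigh}, \ref{lem:morse} and Proposition~\ref{pro:dich}); so what you have done is fill in the citation, which is legitimate.

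On the details: your first step, the uniform lower bound $|s_k|_{g_k}\geq \nu'$ at leafwise critical points, is precisely the content of the paper's Lemma~\ref{lem:neigh}. The paper phrases it slightly differently --- on $\{|s_k|<\nu\}$, $\nu$-transversality together with asymptotic holomorphicity forces $\nabla^\cF s_k$ to be surjective onto $\cL^k_p\cong\C$, and pairing with $s_k(p)\neq 0$ shows $d^\cF f_k(p)\neq 0$ --- but your reformulation via a lower bound on $\partial^\cF f_k$ is equivalent. The rest of your argument (Darboux/normal frame normalization $\varphi_k=\pi k|z|^2+O(|z|^3)$, the decomposition $H=L+Q$ into $J$-invariant and $J$-anti-invariant parts, the estimate $L=-2\pi g_k+O(k^{-1/2})$ once $|s_k|\ge\nu'$ keeps denominators under control, and the $\pm\lambda$ eigenvalue pairing for the $J$-anti-commuting self-adjoint representative of $Q$, giving a $\geq n$-dimensional subspace on which $H<0$) is the standard compact-case argument, and you correctly identified the one point that needs saying in the foliated setting: all the constants entering the approximate-holomorphicity and transversality estimates are uniform over the compact total space $M$ and therefore restrict uniformly to every leaf, so the pointwise Hessian computation never sees the noncompactness of $F$.

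One small caution about bookkeeping, which does not affect correctness but is worth making explicit if you write this up: the cubic error in $\varphi_k=\pi k|z|^2+O(|z|^3)$ carries a factor of $k$ (the weight of $\cL^k$ is $k$ times that of $\cL$), so after rescaling to $g_k$-coordinates $\tilde z=\sqrt{k}\,z$ this error becomes $O(k^{-1/2}|\tilde z|^3)$, and its Hessian in $g_k$-norm is $O(k^{-1/2})$ on the unit $g_k$-ball. That is what justifies folding it into the $O(k^{-1/2})$ term in $L$.
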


This proposition, when applied to a closed leaf, implies Theorem \ref{thm:main} immediately --as seen in \cite{Do96, Ma12}-- 
 for Donaldson divisors.

\subsection{Gradient flows and the topology of open manifolds.}

The study of flows which behave well on  open manifolds already 
appears in the literature on foliation theory \cite{FW86}. For the sake of completeness, we review these facts tailored
to the applications we have in mind. 

Let $f$ be a Morse function on a manifold $M$. For any $a\in \R$  set $M_a=\{x \in M\,|\,f(x)\leq a\}$, and denote by $\mathrm{Crit}_a(f)$
the subset of critical points of $f$ lying in $M\backslash M_a$.

Let $a$ be a regular value for $f$ and let $b>a$. Assume for the moment that $M$ is compact. 
It is customary to study the relative topology of the pair $(M_b,M_a)$  using minus the gradient flow of $f$ 
with respect to some fixed metric $g$.
The key point is that the following dichotomy holds: for any $x\in M_b\backslash M_a$
the trajectory of $-\nabla_gf$ starting at $x$ either enters $M_a$  in finite time, or converges to one of 
the finitely many critical points in
$\mathrm{Crit}_a(f)$.  

If $M$ is no longer compact but $f$ is proper, then of course the study of  the relative topology of the pair $(M_b,M_a)$
goes exactly as in the
compact case. There might be cases --as in our setting coming from approximately holomorphic geometry-- that the natural Morse functions
to be used
are not proper, and one needs to impose an appropriate form of the above dichotomy for trajectories of $-\nabla_gf$:

\begin{lemma}\label{lem:dich} Let $f$ be a Morse function on a manifold $M$ and let $g$ be a metric on $M$ so that $\nabla_gf$ is complete. 
Let
$a$ be a regular value, $b>a$, and assume that the following holds:
 \begin{enumerate}
  \item For every compact subset $X\subset M_b$, there exist finitely many critical points $c_1,\dots,c_{i_X}$ in $\mathrm{Crit}_a(f)$ 
   such that the following dichotomy holds:  a trajectory of $-\nabla_gf$ starting at $x\in X$ either reaches $M_a$ in finite time,
  or converges to a critical point in $\{c_1\}\cup\cdots\cup \{c_{i_X}\}$. 
  \item Every $c \in Crit_a(f)$ has index $\geq j$. 
 \end{enumerate}
Then we have that $ \pi_k(M_b, M_a) = 0$, for $k=0, \ldots j-1$.
\end{lemma}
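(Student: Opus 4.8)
The plan is to mimic the classical Morse-theoretic argument for compact manifolds, using the completeness of $\nabla_g f$ together with hypothesis (1) to replace the compactness that is normally invoked. First I would reduce to showing that the pair $(M_b, M_a)$ is $(j-1)$-connected and then let $b\to\infty$; more precisely, since a continuous map $S^k\to M$ and a null-homotopy land in a compact set, and every compact subset of $M$ is contained in some $M_b$ (here I should be slightly careful: $f$ need not be bounded below on compacta, but $M_b$ is closed and any compact $X$ has $f(X)$ bounded above, so $X\subset M_b$ for $b$ large), it suffices to prove $\pi_k(M_b,M_a)=0$ for all $b>a$ and all $k\le j-1$. So fix such a $b$.

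Next I would run the deformation. Given a compact family, e.g.\ the image of a map $(D^k,S^{k-1})\to (M_b,M_a)$ representing a class in $\pi_k(M_b,M_a)$, let $X\subset M_b$ be its (compact) image and let $c_1,\dots,c_{i_X}\in\mathrm{Crit}_a(f)$ be the finitely many critical points supplied by hypothesis (1). Flowing along $-\nabla_g f$ (which is complete, so the flow exists for all time) I get, for each point of $X$, the dichotomy: either it enters $M_a$ in finite time, or its trajectory converges to one of $c_1,\dots,c_{i_X}$. This is exactly the situation of finite-dimensional Morse theory with finitely many critical values above level $a$, restricted to the flow-saturation of $X$ (which is still contained in some $M_{b'}$, $b'\ge b$, by monotonicity of $f$ along the flow). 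I would then cite/adapt the standard handle-attachment description: $M_{b'}$ deformation retracts, rel $M_a$, onto $M_a$ with cells of dimension $\mathrm{ind}(c_\ell)\ge j$ attached, one for each $c_\ell$ passed. Concretely, one handles the critical points in order of increasing critical value, using local Morse charts near each $c_\ell$ and the usual "flow past a critical level" lemma, the only new ingredient being that trajectories not converging to a $c_\ell$ are guaranteed by (1) to reach $M_a$ in finite time, so no trajectory "escapes to infinity" within the relevant region.

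Having established that, up to homotopy rel $M_a$, the relevant piece of $M_b$ is obtained from $M_a$ by attaching cells of dimension $\ge j$, the conclusion $\pi_k(M_b,M_a)=0$ for $k\le j-1$ is immediate from cellular approximation (a map of a $k$-disc, $k\le j-1$, into a CW pair whose relative cells all have dimension $\ge j$ is compressible into the subcomplex). Combining over all $b$ gives $\pi_k(M,M_a)=0$ for $k\le j-1$, which is the assertion of the lemma.

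The main obstacle I anticipate is making the "finite-time escape or convergence" dichotomy in hypothesis (1) interact cleanly with the handle attachments: one must check that, after pushing $X$ down past the highest critical level among the $c_\ell$, the relevant compact set is still controlled — i.e.\ the hypothesis must be reapplied to the new compact set, or one must argue that the union of $X$ with the (compact) closures of the relevant stable/unstable pieces and flow segments is itself compact and again governed by finitely many critical points. Completeness of $\nabla_g f$ and the fact that $f$ strictly decreases along non-constant trajectories (staying $\le b'$ for controlled $b'$) are what keep everything inside a region where only finitely many critical points matter; assembling these carefully — rather than the handle attachments themselves, which are standard — is where the real work lies. A clean way to organize it is to first prove the statement assuming only one critical value above $a$, then induct on the (locally finite) set of critical values, at each stage shrinking to the compact set produced at the previous stage and invoking hypothesis (1) afresh.
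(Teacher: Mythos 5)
Your approach---build a handle decomposition of (a piece of) $M_b$ rel $M_a$ and then invoke cellular approximation---is a genuinely different route from the paper's, and it is where the difficulty hides. The paper never asserts any homotopy equivalence between sublevel sets. Instead it works directly with the map: fix a smooth $h\colon (N,\partial N)\to (M_b,M_a)$ with $\dim N\le j-1$, choose a relatively compact neighbourhood $\bar U$ of $h(N)$ so that hypothesis (1) supplies finitely many $c_1,\dots,c_{i_{\bar U}}$, choose a small relatively compact neighbourhood $V$ of $h(\partial N)$ with $f(V)$ below the $f(c_i)$ so that $V$ is flowed into $M_a$ in uniform finite time, and then run Milnor's finite induction, ordering the finitely many $c_i$ by \emph{decreasing} value and perturbing $h$ rel $V$ to be transverse to their ascending disks. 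Since each ascending disk has codimension equal to the index $\ge j>\dim N$, transversality means $h'(N)$ misses them entirely, so no trajectory issuing from $h'(N)$ can converge to any $c_i$; the dichotomy then forces every trajectory from the compact set $h'(N)$ into $M_a$ in finite time, and the compactness argument recorded at the start of the paper's proof upgrades ``finite time pointwise'' to ``uniform finite time,'' producing the null-homotopy.

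The specific place your version goes wrong is the assertion that ``$M_{b'}$ deformation retracts, rel $M_a$, onto $M_a$ with cells of dimension $\ge j$ attached.'' In this non-proper setting $M_{b'}\setminus M_a$ may contain infinitely many critical points with accumulating critical values; hypothesis (1) controls only trajectories issuing from a fixed compact set and says nothing global about $M_{b'}$, so this handle decomposition is simply unavailable. Your hedge---restrict to ``the flow-saturation of $X$''---points in the right direction but is not a set to which any off-the-shelf handle theorem applies: it is not an open sublevel region and need not be a manifold with corners. Your closing paragraph, inducting over ``the (locally finite) set of critical values'' and invoking (1) afresh at each stage, also quietly assumes local finiteness of critical values above $a$, which the hypotheses do not grant; what (1) gives is finiteness of the \emph{relevant} critical points for each fixed compact set, not discreteness of critical values. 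The paper's map-level argument avoids all of this: one never needs to know the homotopy type of any region, only that the perturbed compact image misses the finitely many ascending disks supplied by (1) and therefore flows down. If you reorganise your argument to deform the map rather than the space, the difficulties you flagged disappear.
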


\begin{proof} 
Let us start by making the following observation: if $X$ is as in assumption $(1)$ and the collection 
$\{c_1\}\cup\cdots\cup \{c_{i_X}\}$ is empty, then we claim that $X$ is taken in finite time to $M_a$ 
by the flow $\phi$ of $-\nabla_gf$. Indeed, for every $x \in X$ there exists a time $t_x > 0$ such that 
$f(\phi_{t_x}(x)) < a$; further, since for fixed $t$, $\phi_t$ is continuous, there is a small ball 
$B_g(x,\varepsilon_x)$ centered at $x$ such that $\phi_{t_x}(B_g(x,\varepsilon_x)) \subset M_a$. 
Then, the result follows by compactness of $X$. 

Now, let $N$ be a compact manifold and $h\colon (N,\partial N) \rightarrow (M_b, M_a)$ be a smooth map. Let $U$ be a relatively
compact neighborhood of $h(N)$. Then assumption $(1)$ implies that trajectories starting at points in $\bar{U}$ can only enter
$M_a$ in finite time or converge to one of the finitely many critical points $\{c_1,\dots,c_{i_{\bar{U}}}\}$. 

Observe that there is a small relatively compact neighborhood $V$ of $h(\partial N)$ such that the flow 
of $-\nabla_gf$ sends $V$ into $M_a$: this follows if $V \subset U$ is selected so that $f(V)$ lies below the critical
values $\{f(c_1),\dots,f(c_{i_{\bar{U}}})\}$.

We now construct $h'$, an arbitrarily small perturbation of $h$ relative to $V$. Proceeding inductively over the finite list $\{c_1,\dots,c_{i_{\bar{U}}}\}$, as in \cite{Mil}, we obtain $h'$ that is transverse to the ascending disks of the critical points and that satisfies $h'(N) \subset U$.

If $N$ has dimension at most $j-1$ then, by hypothesis $(2)$, transversality to the ascending disks means 
empty intersection. The hypotheses of the claim at the start of the proof are satisfied and it follows that $ \pi_k(M, M_a) = 0$, for $k=0, \ldots j-1$.
\end{proof}

The following result describes quantitative conditions on the gradient vector field granting the dichotomy in point (1) of Lemma \ref{lem:dich}.

\begin{proposition}\label{pro:dich}
 Let $f$ be a Morse function, $g$ be a complete metric on $M$, and $a<b\in \R$. Assume that there exist real constants $D,E>0$ and open subsets $\mathcal{C}_i \subset M_b$, $i\in I$, such that:
 \begin{enumerate}
\item For any pair $i,{i'}\in I$, $i\neq i'$,  we have $d_g(\mathcal{C}_i,\mathcal{C}_{i'}) > D$.
\item The diameter of the sets $\mathcal{C}_i$ is at most $E$.
\item There exist real numbers $\delta_1, \delta_2 > 0$, such that  
\[\delta_2 \geq|\nabla_g(f)(p)| \geq \delta_1, \forall p \in M_b \setminus \left(\bigcup_{i\in I} \mathcal{C}_i \right). \]
\end{enumerate}

Then $-\nabla_gf$ is complete and the dichotomy in point (1) of Lemma \ref{lem:dich} for $-\nabla_gf$ holds. 
\end{proposition}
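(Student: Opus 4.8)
The statement to prove is Proposition \ref{pro:dich}: under the three quantitative hypotheses (well-separated ``clusters'' $\mathcal{C}_i$ of bounded diameter, with a two-sided gradient bound outside the clusters), the vector field $-\nabla_g f$ is complete and the dichotomy of Lemma \ref{lem:dich}(1) holds. I would proceed in two stages: first completeness, then the dichotomy. For completeness, the key point is that a trajectory of $-\nabla_g f$ has speed bounded below by $\delta_1$ while outside the clusters, so it cannot spend ``too much arc length'' outside the clusters without decreasing $f$ past $a$; combined with the lower bound $f \geq a$ on $M_b \setminus M_a$ (one only cares about trajectories in this region, since once a trajectory enters $M_a$ there is nothing to prove), one controls the total time. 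Inside a cluster $\mathcal{C}_i$, the metric diameter is at most $E$, so a trajectory cannot escape to infinity there either; and crossing from one cluster to another costs at least arc length $D$, hence a definite drop in $f$ by the lower gradient bound, so only finitely many cluster-visits occur before $f$ drops below $a$. Since $g$ is complete and trajectories stay in a region where $f$ is bounded below, no trajectory can blow up in finite time; this gives completeness.

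\textbf{The dichotomy.} Fix a compact set $X \subset M_b$. Consider a trajectory $\gamma$ of $-\nabla_g f$ starting at $x \in X$. Along $\gamma$, $f$ is nonincreasing. If $\gamma$ ever enters $M_a$ we are done, so suppose it stays in $M_b \setminus M_a$ for all time. I would argue that $\gamma$ eventually enters some cluster $\mathcal{C}_i$ and never leaves it: indeed, by the lower gradient bound $\delta_1$ outside the clusters, the time spent outside clusters is at most $(f(x) - a)/\delta_1^2$ (each unit of time outside spends $f$ at rate $|\nabla_g f|^2 \geq \delta_1^2$), which is finite and, crucially, \emph{uniformly bounded} for $x \in X$ by $C_X := (\max_X f - a)/\delta_1^2$. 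Likewise each transition between distinct clusters traverses arc length $\geq D$ outside the clusters, costing time $\geq D/\delta_2$ and $f$-drop $\geq D\delta_1/\delta_2$ (using $|\dot\gamma| \le \delta_2$ off the clusters, hence time $\ge D/\delta_2$, and $f$ decreases at rate $|\nabla_g f|^2 \ge \delta_1^2 \ge \delta_1 |\dot\gamma|/1$... I would make the bookkeeping precise), so at most $N_X := (\max_X f - a)/(D\delta_1/\delta_2) + 1$ clusters are ever visited by trajectories from $X$. Within the finitely many clusters visited, $f$ restricted to $\overline{\mathcal{C}_i} \cap M_b$ attains a minimum; as $t \to \infty$ the trajectory, trapped in a single cluster with $f(\gamma(t))$ nonincreasing and bounded below, must have $|\nabla_g f(\gamma(t))| \to 0$, so its $\omega$-limit set consists of critical points, all lying in $\overline{\mathcal{C}_i}$. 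Since $f$ is Morse, critical points are isolated, and $\overline{\mathcal{C}_i} \cap M_b$ is compact (bounded diameter plus metric completeness — I would invoke that closed balls are compact, or argue directly that the relevant portion lies in a compact set), hence only finitely many critical points $c_1, \dots, c_{i_X}$ are available across all $N_X$ visited clusters; a connected $\omega$-limit set in a discrete set is a single point, so $\gamma$ converges to one of them.

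\textbf{Main obstacle.} The delicate part is not the ``energy accounting'' — that is routine — but ensuring that the $\omega$-limit set is actually a single critical point and that the list $c_1, \dots, c_{i_X}$ is genuinely finite and depends only on $X$. The finiteness of visited clusters from $X$ is the crux: I must rule out a trajectory wandering through infinitely many clusters, and the two-sided gradient bound outside clusters is exactly what forbids this (each inter-cluster excursion burns a fixed quantum of $f$). A secondary subtlety is that a trajectory could in principle ``graze'' a cluster — enter and exit many times — so I would phrase the argument in terms of the cluster currently containing $\gamma(t)$ and note that once the remaining $f$-budget $f(\gamma(t)) - a$ drops below $D\delta_1/\delta_2$ the trajectory can never again leave its current cluster, pinning it down. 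I would also need the elementary fact that for a $C^1$ proper-enough flow, convergence to a critical point follows from $f(\gamma(t))$ decreasing to a critical value together with the trajectory being confined to a compact set containing only one critical point at that value — a standard Lyapunov/Lojasiewicz-free argument since we only need the Morse (hence isolated-critical-point) hypothesis, not a rate. Assembling these, the hypotheses of Lemma \ref{lem:dich} are met with $j = n$ via Proposition \ref{propo:index}, which is how Proposition \ref{pro:dich} feeds back into Theorem \ref{thm:picardret}.
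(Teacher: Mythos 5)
Your proposal is correct in substance and rests on the same fundamental mechanism as the paper's proof: the two-sided gradient bound off the clusters, together with the separation $D$ and diameter bound $E$, forces a definite drop in $f$ per unit of arc length away from the clusters, which confines non-escaping trajectories to a compact region where the Morse hypothesis gives finitely many critical points, and then a standard Lyapunov/$\omega$-limit argument finishes. The packaging differs, though. The paper isolates a clean intermediate statement (Lemma \ref{lem:appendix}): there is an explicit radius $R$, depending only on $a,b,D,E,\delta_1$, such that $d_g(\phi_t(x),x)>R$ forces $f(\phi_t(x))<a$; this is proved by replacing the trajectory segment with an auxiliary piecewise-geodesic curve $\eta$ and bounding the ratio $\length(\widetilde\eta)/\length(\eta)\geq D/(2(E+D))$. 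With $R$ in hand, every trajectory starting in the compact $X$ either reaches $M_a$ or is trapped in the single relatively compact set $X(R)$, so the finite list of critical points is uniform over $X$ for free. Your cluster-counting version gets to the same place, but it has to patch two things you only gesture at: (i) the ``eventually enters a cluster and never leaves it'' claim is not literally true because of grazing, and has to be replaced by ``eventually confined to a bounded neighborhood of one cluster''; and (ii) the visited clusters a priori depend on the trajectory, so to obtain a finite list $c_1,\dots,c_{i_X}$ uniform over $X$ you still need to show all trajectories from $X$ stay in one bounded region --- which is precisely what the paper's constant $R$ delivers in one stroke. There is also a small bookkeeping slip: the $f$-drop over an inter-cluster excursion of arc length $\geq D$ is $\geq D\delta_1$ (since $\tfrac{d}{dt}f=-|\nabla_g f|^2\leq-\delta_1|\dot\gamma|$ there), not $D\delta_1/\delta_2$; the constant only needs to be positive, so this is harmless but worth fixing. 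Neither you nor the paper spells out completeness separately, and in both cases it follows from the same trapping estimate (trajectories stay in a precompact set, so are defined for all forward time), so that is fine.
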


Essentially, the proposition states that the critical points of $f$ come in families, indexed by $I$ and contained in the sets $\mathcal{C}_i$, that are far from each other. In order to prove Proposition \ref{pro:dich}, let us introduce some notation and prove an auxiliary lemma. Given any $x \in M$, we denote by $\gamma_x$ the positive half of the flow line that contains $x$. Denote by $\phi_t$ the flow of $f$ at time $t$. Let $\gamma_x^t$ designate the segment of the curve $\gamma_x$ between $x$ and $\phi_t(x)$. Then:

\begin{lemma} \label{lem:appendix}
Under the assumptions of Proposition \ref{pro:dich}, there is a constant $R$, independent of $t\in \mathbb{R}$ and $x \in M_b$, such that $d_g(\phi_t(x), x) > R$ implies $f(\phi_t(x)) < a$.
\end{lemma}
\begin{proof}
For every curve $\gamma$ we denote by $\widetilde \gamma$ the (possibly disconnected) curve:
\[ \widetilde \gamma = \left\{ p \in \gamma: p \notin \bigcup_{i\in I} \mathcal{C}_i \right\}, \]
that is, the union of segments of $\gamma$ that are disjoint from the sets $\mathcal{C}_i$.

Given any curve $\gamma \subset B(x,R)$ starting at $x$ and intersecting the boundary of $B(x,R)$ at $y$, we can associate to it another curve, which we denote by $\eta = \eta_\gamma$, using the following procedure:
\begin{enumerate}
\item list, in order, all the sets $\mathcal{C}_i$ that $\gamma$ intersects. Remove all the consecutive repetitions of the same $\mathcal{C}_i$, listing just the first one in each series of repetitions. Write $\{ \mathcal{C}_{i_j}\}_{j \in [1,..k]}$ for this finite list,

\item mark the entry and exit points $e_j$ and $f_j$ of $\gamma$ into each $\mathcal{C}_{i_j}$. In the case of consecutive repetitions of the same $\mathcal{C}_i$, just mark the first entry point and the last exit point of the series. For simplicity, denote $f_0 = x$ and $e_{k+1} = y$,

\item call $\eta$ the piecewise smooth curve formed by connecting these marked points in the order they appear. From $e_j$ to $f_j$, take the shortest geodesic between the two points. From $f_j$ to $e_{j+1}$, take the shortest path not intersecting any $C_i$. Denote these paths by $l(e_j,f_j)$ and $l(f_j,e_{j+1})$ respectively. 
\end{enumerate}

Assume $R > E+D$. If $k=0,1$, it is immediate that 
$$ \dfrac{\length(\widetilde\eta)}{\length(\eta)} \geq \frac{D}{E+D},  $$
otherwise, the following estimate holds:
\[  \dfrac{\length(\widetilde\eta)}{\length(\eta)} = \dfrac{\sum_{j=0}^k \length(l(f_j,e_{j+1}))}{\sum_{j=0}^k \length(l(f_j,e_{j+1})) +  \sum_{j=1}^k \length(l(e_j,f_j))} \geq \]
\[ \dfrac{\sum_{j=1}^{k-1} \length(l(f_j,e_{j+1}))}{\sum_{j=1}^{k-1} \length(l(f_j,e_{j+1})) +  kE} \geq \dfrac{(k-1)D}{(k-1)D+kE} \geq \dfrac{D}{2(E+D)} .\] 

For any radius $r > E+D$, denote by $\tau$ the time at which the curve $\gamma_x$ first intersects $\partial B(x,r)$. Denote this intersection point by $y$. Consider the segment $\gamma_x^\tau$ and its associated curve $\eta = \eta_{\gamma_x^\tau}$. Use the fact that over $\widetilde \gamma_x^\tau$ we have a lower bound for the gradient  $|\nabla_g f| > \delta_1 > 0$:
\[ |f(y) - f(x)| \geq \delta_1 \length(\widetilde\gamma_x^\tau) \geq \delta_1 \length(\widetilde\eta) \geq \delta_1 \length(\eta) \dfrac{D}{2(E+D)} \geq r\dfrac{\delta_1 D}{2(E+D)} \]
which implies that, if $r$ is taken to be large enough, $|f(y) - f(x)| > b-a$, and hence $y \in M_a$.
\end{proof}

\begin{proof}[Proof of Proposition \ref{pro:dich}]
Let $X \subset M_b$ be a compact set. Let $R$ be the universal constant given by Lemma \ref{lem:appendix}. 
Denote by $X(R)$ the $R$-neighborhood of $X$, which is a relatively compact set.
Lemma \ref{lem:appendix} implies that any trajectory starting at $X$ either reaches the interior of $M_a$ -- 
which is equivalent to saying that it reaches $M_a$ in finite time -- or it remains in $X(R)$ for all time. 

It must be shown that if a trajectory $\gamma_x$ remains within $X(R)$ for all times then it must converge to a critical point.
Since $X(R)$ is relatively compact and $f$ is a Morse function, there is a finite number $k$ of critical points in its closure.
Each of those critical points $\{c_i\}_{i=1}^{k}$ has an arbitrarily small neighborhood $V_i$ which corresponds to a ball 
in the standard Morse model around $c_i$. In particular, a trajectory that intersects $V_i$ must intersect just once, 
either converging to $c_i$ or escaping from $V_i$ eventually. From this it follows that there is a time $t_0 > 0$ such
that $\gamma_x(t) \notin V_i$, for all $t > t_0$ and every $i$. Since the gradient $|\nabla_g f| > \delta > 0$ is
bounded from below in $X(R) \setminus \cup_{i=1..k} V_i$, this shows that $f(\gamma_x(t)) < a$ for $t$ large enough, which is a contradiction.
\end{proof}

\section{Proof of Theorem \ref{thm:main}.}

Fix some leaf $F \in \cF$. All we need to do now is to check that, for a suitable choice of Morse
function and metric on $F$, the hypotheses of Proposition \ref{pro:dich} 
are satisfied for $F$. Our candidate
is the restriction to the leaf of the function $f_k = \log|s_k|^2$, and the restriction to the leaf of any Riemannian metric
on  $M$. 

 We shall prove a couple of preliminary lemmas, for which we need to recall some notation. 
 Given a function $f$, defined on a manifold endowed with a codimension one foliation $(M, \cF)$,
 the tangential differential $d^\cF f$ is the composition of the differential with the projection $T^*M\rightarrow (T\cF)^*$. 
 The points in which $d^\cF f$ vanishes are the \emph{tangential critical points of $f$}, which we denote by $\Sigma^\cF(f)$. Of course, $\Sigma^\cF(f)$ are nothing but the critical points of the restriction of $f$ to each leaf of $\cF$. 

\begin{lemma} \label{lem:neigh}
For every $k$ large enough, the $2$--calibrated submanifold $W_k \subset M$ has a tubular neighborhood that contains a full regular level set
of $f_k = \log|s_k|^2$ and which is also disjoint from $\Sigma^\cF(f_k)$.
\end{lemma}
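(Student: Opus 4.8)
The plan is to exploit the approximately holomorphic estimates on $s_k$ near its zero set $W_k$, together with the $\nu$-transversality to zero of the sequence, in order to show that in a suitably scaled tubular neighborhood of $W_k$ the tangential differential $d^\cF f_k$ is forced to be nonzero, while simultaneously a full regular level set of $f_k$ sits inside that neighborhood. First I would recall the local picture: in a $g_k$-unit ball around a point $x\in W_k$, after rescaling by $\sqrt k$, the section $s_k$ looks $C^0$-close (to order $k^{-1/2}$) to a complex-linear surjection onto $\cL^k_x$ by approximate holomorphicity together with $\nu$-transversality; hence $|s_k|_{g_k}$ grows linearly in the $g_k$-distance $r$ to $W_k$ as long as $r$ stays below some uniform $\rho_0>0$. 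Concretely, there are uniform constants $0<c_1<c_2$ with $c_1 r \le |s_k|_{g_k}(p) \le c_2 r$ for $d_{g_k}(p,W_k)=r\le \rho_0$, and, differentiating, $|\nabla^\cF s_k|_{g_k}\ge \nu/2$ on this shell once $\rho_0$ is chosen small (this is exactly the neighborhood on which the transversality term dominates). I would take the tubular neighborhood $\cU_k$ to be $\{d_{g_k}(\cdot,W_k)<\rho_0\}$, which, transported back to the unscaled metric $g$, is a genuine shrinking tubular neighborhood of $W_k$.

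Next I would check the two required properties. For the regular level set: $f_k=\log|s_k|^2$, so $d^\cF f_k = 2\,\mathrm{Re}\,\frac{\langle \nabla^\cF s_k, s_k\rangle}{|s_k|^2}$, and on the shell $\tfrac12\rho_0 < d_{g_k}(\cdot,W_k) < \rho_0$ the estimates above give $|d^\cF f_k|_{g_k}\ge c_3/r \ge c_3/\rho_0 > 0$ for a uniform $c_3$; in particular, choosing any value $a_k$ in the interval $(\,2\log(c_1\rho_0/2),\,2\log(c_2\rho_0/2)\,)$ of $f_k$-values realized strictly inside the shell, the level set $\{f_k=a_k\}\cap F$ is a regular level set of $f_k|_F$ contained in $\cU_k$, and it is \emph{full} in the sense that it separates $W_k\cap F$ from the rest of $F$ (any path in $F$ leaving $\cU_k$ must cross it, by the monotone growth of $|s_k|$ in $r$). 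For disjointness from $\Sigma^\cF(f_k)$: since $d^\cF f_k$ is bounded below on the whole shell (and in fact on $0<d_{g_k}<\rho_0$, using that $|s_k|_{g_k}\ge c_1 r$ keeps the denominator from collapsing faster than the numerator), $\cU_k$ contains no tangential critical point of $f_k$ other than possibly ones inside $W_k$ itself — but $f_k$ is not even defined on $W_k$, so $\Sigma^\cF(f_k)\cap \cU_k=\emptyset$ after shrinking $\cU_k$ to exclude $W_k$, i.e. replacing it by $\{0<d_{g_k}(\cdot,W_k)<\rho_0\}$ together with $W_k$; the level set still lies in the open part.

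The main obstacle I anticipate is making the constants genuinely \emph{uniform in $k$}: the rescaling by $g_k=kg$ is designed precisely so that the local models stabilize, but one must verify that $\rho_0$, $c_1$, $c_2$, $c_3$ can be chosen independently of $k$ and of the point of $W_k$, using only the approximately holomorphic bounds $|\nabla^p s_k|_{g_k}<C$ ($p=0,1$), the $\bar\partial$-smallness, and the uniform $\nu$-transversality. This is the standard "local model" step of Donaldson theory, so I would invoke the normalized local trivializations and Taylor expansion from \cite{Do96,IM04a}; the only extra care needed here, relative to the compact/symplectic case, is that everything is carried out tangentially to $\cF$, i.e. with $\nabla^\cF$ and $d^\cF$ in place of the full connection — but since $W_k$ is 2-calibrated (transverse to $\cF$ and leafwise symplectic), the leafwise restriction of the local model is again the symplectic local model in one lower pair of dimensions, so the argument goes through verbatim leaf by leaf, with constants uniform because they come from $M$. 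Finally I would remark that, having produced $\cU_k$ independent of the leaf, the same neighborhood works simultaneously for all $F$, which is what is needed for the subsequent application of Proposition \ref{pro:dich}.
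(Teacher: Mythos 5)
Your proof is essentially correct, but it takes a noticeably more elaborate route than the paper's. You build a metric tubular neighborhood $\{d_{g_k}(\cdot,W_k)<\rho_0\}$ and invoke the full local-model analysis of Donaldson theory to establish the two-sided linear growth estimate $c_1 r\le|s_k|\le c_2 r$, together with the derived bound $|d^\cF f_k|\ge c_3/r$. The paper sidesteps all of this: it simply takes $U=\{|s_k|<\nu\}$, the set explicitly handed to it by the definition of $\nu$-transversality, and observes that on $U$ the inequality $|\nabla^\cF_v s_k|\ge \nu$ for some unitary $v\in T\cF$, combined with the approximate-holomorphicity estimate $|\nabla_{Jv}s_k - i\nabla_v s_k|=O(k^{-1/2})$, forces $\nabla^\cF s_k$ to be surjective, hence $d^\cF\log|s_k|^2\neq 0$. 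No distance-to-$W_k$ comparison, no local trivialization, no Taylor expansion. Your approach does buy you the quantitative lower bound $|d^\cF f_k|\ge c_3/r$, which is a nice extra but is not needed for the lemma (only the non-vanishing of $d^\cF f_k$ is used). Conversely, the price you pay is precisely the ``main obstacle'' you flag yourself --- uniformity in $k$ of all the model constants --- which the paper's argument never has to face because $\nu$ is already a $k$-independent constant baked into the definition.

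One small quantitative slip worth fixing: your interval $\bigl(2\log(c_1\rho_0/2),\,2\log(c_2\rho_0/2)\bigr)$ of candidate values $a_k$ does not guarantee that the entire level set $\{f_k=a_k\}$ stays inside $\cU_k$ unless $c_2<2c_1$, which need not hold. You should instead take $a_k<2\log(c_1\rho_0)$, so that the lower bound $|s_k|\ge c_1 r$ forces any point with $d_{g_k}(\cdot,W_k)\ge\rho_0$ to lie strictly above the level. This is cosmetic and does not affect the structure of the argument. Finally, your explicit formula $d^\cF f_k = 2\,\mathrm{Re}\,\langle\nabla^\cF s_k,s_k\rangle/|s_k|^2$ usefully makes visible the step the paper leaves implicit (namely, that surjectivity of $\nabla^\cF s_k$ together with $s_k\neq 0$ gives $d^\cF f_k\neq 0$); that is a genuine clarification.
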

\begin{proof}
It is enough to check that $h_k = ||s_k||^2$ satisfies the Lemma, since $\log$ is an increasing monotone function. 

 We claim that the neighborhood $U=\{x\in M\,|\, ||s_k(x)||<\nu\}$ of the submanifold $W_k$ does not intersect $\Sigma^\cF(f_k)$. Assume that $p\in U$. By the $\nu$--transversality along $\cF$ of the section $s_k$, 
 there is a unitary vector field $v\in T_p\cF$ such that $|| \nabla_v s_k(p)|| \geq \nu$. By asymptotic holomorphicity, for $k$ large, we have that  the unitary vector field $Jv \in T_p \cF$ satisfies $|| \nabla_{Jv} s_k(p) - i \nabla_{v} s_k(p)|| =O(k^{-1/2})$. Therefore, the map $\nabla^{\cF} s_k(p)$ is surjective. We conclude that $p \not \in  \Sigma^\cF(f_k)$. 
\end{proof}

\begin{lemma} \label{lem:morse}
Let $F$, a leaf of $\cF$, be fixed. After a perturbation of the sequence $s_k$, preserving transversality
to zero and approximately holomorphicity, it can be assumed that:
\begin{enumerate}
\item the restrictions of the $f_k$ to $F$ are Morse functions.
\item $\Sigma^\cF(f_k)$ is a finite union of disjoint circles in general position with respect to 
$\cF$. Their tangency points are turning points, i.e., birth-death type singularities for the restriction of $f_k$ to the corresponding leaf.
\end{enumerate}

\end{lemma}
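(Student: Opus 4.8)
The plan is to establish the two assertions by a standard but careful transversality argument, treating the tangential critical locus $\Sigma^\cF(f_k)$ as a whole.

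\medskip

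\textit{Step 1: Understanding $\Sigma^\cF(f_k)$ away from $W_k$.}
First I would note that, by Lemma~\ref{lem:neigh}, all tangential critical points of $f_k$ lie outside a tubular neighborhood $U$ of $W_k$, where $s_k$ is bounded below in norm. On $M\setminus U$ the function $f_k=\log|s_k|^2$ is honestly defined and its tangential differential can be computed via the connection: $d^\cF f_k = \tfrac{2}{|s_k|^2}\,\mathrm{Re}\langle \nabla^\cF s_k, s_k\rangle$. Approximate holomorphicity gives that $\nabla^\cF s_k$ is close to its complex-linear part $\partial s_k$, and a short computation along the lines of the proof of Proposition~\ref{propo:index} (the usual Donaldson estimate) shows that at a tangential critical point the complex-linear map $\partial s_k$ is controlled — in fact the concavity/index estimate in Proposition~\ref{propo:index} ultimately rests on the same identity. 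The conclusion I want from this step is dimension-count data: generically $\Sigma^\cF(f_k)$ is a $1$-dimensional submanifold of $M\setminus U$. Indeed $f_k$ has $2n+1-1=2n$ coordinates transverse to the leaf direction playing no role, and the $2n$ leafwise equations $d^\cF f_k=0$ cut down the $(2n+1)$-manifold to dimension $1$; one must show this system is generically transverse after perturbing $s_k$.

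\medskip

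\textit{Step 2: Perturbing $s_k$ to achieve genericity.}
This is where I would invoke the global perturbation machinery of approximately holomorphic theory (Donaldson's and the Ibort--Mart\'inez Torres globalization scheme used in \cite{IM04a}). The space of approximately holomorphic, $\nu$-transverse sequences is, roughly, an open dense condition, and one can further impose finitely many generic conditions while keeping control of $|\nabla^p s_k|_{g_k}$ and $|\nabla^p\bar\partial s_k|_{g_k}$, at the cost of slightly worsening constants. The three conditions I need to arrange generically are: (a) the restriction $f_k|_F$ is Morse — i.e. the leafwise Hessian is nondegenerate at each point of $\Sigma^\cF(f_k)\cap F$; (b) $\Sigma^\cF(f_k)$ is a $1$-manifold, i.e.\ the full leafwise gradient map is transverse to zero in the ambient $M$ (equivalently, the leafwise Hessian has rank $\geq 2n-1$ along it, with at most simple degeneration transverse to $\cF$); (c) the resulting curve $\Sigma^\cF(f_k)$ is in general position with respect to $\cF$, meaning its tangencies with the leaves are nondegenerate (quadratic) folds — the "turning points''. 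Each of these is an open-dense (or at least residual) condition on jets of $s_k$ along $\Sigma^\cF(f_k)$, which has the right codimension to be achieved by a generic finite-dimensional perturbation of the approximately holomorphic sequence. Compactness of $M$ lets me extract finitely many such conditions.

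\medskip

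\textit{Step 3: Assembling the statement.}
With the perturbation in hand, (1) is immediate: $f_k|_F$ has nondegenerate leafwise Hessian at each of its critical points, hence is Morse. For (2), $\Sigma^\cF(f_k)$ is a compact $1$-submanifold of $M\setminus U$ (compactness since $M$ is closed and $\Sigma^\cF(f_k)$ is a closed subset avoiding $W_k$), so it is a finite disjoint union of circles. General position with respect to $\cF$ means that the restriction of the defining function of the leaves (locally, a transverse coordinate) to each circle is a Morse function on the circle, whose critical points are exactly the points where the circle is tangent to a leaf; these are the turning points, and there are finitely many of them, all nondegenerate.

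\medskip

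\textit{Main obstacle.}
The hard part is Step~2 — specifically, checking that the desired genericity of $\Sigma^\cF(f_k)$ (a codimension-one-type condition \emph{along} a locus whose own dimension depends on the condition) can be reached \emph{within} the class of approximately holomorphic, leafwise-$\nu$-transverse sequences, uniformly in $k$. Standard Thom--Smale transversality is not directly available because we are not free to perturb in an infinite-dimensional space; one must use the localized, controlled perturbation lemmas of Donaldson/Ibort--Mart\'inez Torres to upgrade "transverse to zero'' to "transverse to zero together with finitely many jet conditions'', while monitoring the constant $C$ in the approximate holomorphicity bound. I also need that the perturbation is leaf-by-leaf harmless: the genericity conditions (1)--(3) are phrased for a \emph{fixed} leaf $F$, so after perturbing $s_k$ I only need them on that one leaf, which makes the transversality count comfortable; the subtlety is purely that the admissible perturbations are constrained. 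Once that is granted, everything else is routine differential topology on the circle $\Sigma^\cF(f_k)$.
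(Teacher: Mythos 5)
Your overall aim is right, and you correctly isolate the genuine difficulty: the admissible perturbations are those of the section $s_k$ that preserve the approximately holomorphic and $\nu$-transversality estimates, and standard Thom--Smale genericity is not directly available. However, the route you propose to clear that obstacle — re-running the Donaldson/Ibort--Mart\'inez Torres globalization scheme with extra jet conditions along $\Sigma^\cF(f_k)$ — is not what the paper does, and as you yourself note, you do not actually carry it out; you only assert that it "should" work. This would require a quantitative one-parameter (or higher-jet) estimated-transversality argument \`a la Auroux, which is considerably more machinery than is needed.

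The paper's proof is shorter and sidesteps the whole issue by decoupling the two problems. First, it perturbs the \emph{function} $f_k$, not the section: it invokes the classical foliated Morse theory result of Ferry--Wasserman \cite{FW86}, which gives an arbitrarily $C^r$-small perturbation making $\Sigma^\cF(f_k)$ a $1$-manifold whose non-Morse points are finitely many birth--death (turning) points, together with an explicit local model to displace those turning points off the fixed leaf $F$. Second — and this is the key trick your proposal misses — it realizes any such $C^r$-small function perturbation $\varepsilon_k$, supported near $\Sigma^\cF(f_k)$ and hence (by Lemma \ref{lem:neigh}) away from the zero set $W_k$, as a perturbation of the section by the explicit formula $\widetilde{s}_k = s_k\sqrt{1 + \varepsilon_k/f_k}$. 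Since $f_k$ is bounded below on the support of $\varepsilon_k$, this is well defined, and the direct computation $\nabla\widetilde{s}_k = \nabla s_k\sqrt{1+\varepsilon_k/f_k} + s_k\,\tfrac{f_k\nabla\varepsilon_k - \varepsilon_k\nabla f_k}{2f_k^2\sqrt{1+\varepsilon_k/f_k}}$ (and its higher-order analogues) shows that the new sequence remains approximately holomorphic and $\nu$-transverse. So rather than forcing genericity \emph{inside} the constrained class, the paper achieves genericity by an unconstrained function perturbation and then checks \emph{a posteriori} that the corresponding section perturbation stays in the class. If you want to rescue your approach you would need to actually prove the higher-jet estimated-transversality statement you invoke in Step 2; the paper's route buys you the result essentially for free once you know \cite{FW86}.
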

\begin{proof}
According to \cite{FW86}, after an arbitrarily small  $C^r$ perturbation, $r\geq 2$, the set of tangential critical points $\Sigma^\cF(f_k)$
can be assumed to fit into a 1-dimensional manifold that is transverse to $\cF$ everywhere but at the finite collection of turning points $c_1,\dots,c_d$. Every other point is a non--degenerate critical point for the restriction of $f_k$ to the corresponding leaf. The turning points satisfy the following relevant property: in a small foliated chart, a plaque not containing the turning point intersects $\Sigma^\cF(f_k)$ either in the empty set or in two tangential critical points. 

Assertion (1) in the Lemma follows by showing that none of the 
$c_1,\dots,c_d$ belong to the fixed leaf $F$: if any of them do, a $C^r$--small isotopy, transverse to $\cF$ at the turning point, can be used to move it to a nearby leaf. This is described in detail in \cite{FW86}.

These $C^r$ perturbations of $f_k$ can be taken to be the result of a $C^r$ perturbation of $s_k$. 
Indeed, let $\varepsilon_k$ be a $C^r$ perturbation of $f_k$. The function $\varepsilon_k$ can be assumed to be identically zero away from an arbitrary small neighborhood of $\Sigma^\cF(f_k)$ so, by lemma \ref{lem:neigh}, the following expression is well defined:
\[ \widetilde{s}_k = s_k\sqrt{1 + \varepsilon_k/f_k} ,\]
since $f_k$ is bounded from below in the support of $\varepsilon_k$. It is clear that
\[ || \widetilde{s}_k || = f_k + \varepsilon_k. \]
The asymptotic holomorphicity of the sequence $\tilde{s}_k$ can be readily checked:
\[ \nabla \widetilde{s}_k = \nabla s_k\sqrt{1 + \varepsilon_k/f_k} + s_k\dfrac{f_k\nabla \varepsilon_k - \varepsilon_k \nabla f_k}{2f_k^2\sqrt{1 + \varepsilon_k/f_k}},  \]
where the second term is $C^r$-small and the first is $C^r$-close to $\nabla s_k$. A similar computation for the higher order derivatives concludes the claim. 
\end{proof}

We can finally address the proof of the theorem.

\begin{proof}[Proof of Theorem \ref{thm:main}]
Fix a leaf $F$ and assume that we have all the data needed for developing approximately--holomorphic geometry in $M^{2n+1}$. The metrics $g_k$ induce complete metrics in $F$. Given an approximately--holomorphic sequence $s_k$, with corresponding Donaldson-type submanifolds $W_k$, an application of Lemma \ref{lem:morse} yields a new approximately--holomorphic sequence, still denoted by $s_k$, that induces Morse functions $(f_k)_{\mid F}$ in $F \setminus W_k$.

By Lemma \ref{lem:neigh}, $W_k$ has an $\varepsilon$-neighborhood containing a regular level $a_k$. Lemmata \ref{lem:neigh} and \ref{lem:morse} together mean that $\Sigma^\cF(f_k)$ has a small tubular neighborhood of positive radius not intersecting the level $a_k$. 

By Lemma \ref{lem:morse}, the manifold $\Sigma^\cF(f_k)$ is transverse to $\cF$ except in a finite number of turning points $c_1,\dots,c_d$. Fix a closed geodesic arc $T_i$ through each $c_i$, transverse to the foliation. Let $B^{2n}(0,r) \subset \mathbb{R}^{2n}$ be the closed ball of radius $r$. For $r>0$ sufficiently small, the exponential map for the leafwise metric $g_k^\cF$ yields disjoint foliated charts $\phi_i: U_i  \rightarrow [0,1] \times B^{2n}(0,r)$ satisfying $\phi_i(T_i) = [0,1] \times \{0\}$. Having fixed $r$, by taking the $T_i$ sufficiently short -- effectively shrinking $U_i$ in the vertical direction -- it can be assumed that:
\[ \phi_i(\Sigma^\cF(f_k) \cap U_i) \subset [0,1] \times B^{2n}(0,r/2) \]

Consider the family of open arcs $I_j \cong (0,1) \subset \Sigma^\cF(f_k)$, $j \in [1,2,..,l]$, and circles $I_j \cong \mathbb{S}^1 \subset \Sigma^\cF(f_k)$, $j \in [l+1,2,..,m]$, comprising $\Sigma^\cF(f_k) \setminus (\cup_{i=1..d} U_i)$. For sufficiently small $0 < s < r$, the exponential map for the metric $g_k^\cF$ defines disjoint charts $\psi_j: V_j  \rightarrow I_j \times B^{2n}(0,s)$. The union of the $U_i$ and the $V_j$ covers $\Sigma^\cF(f_k)$.

The subsets $\mathcal{C}_i$, as in Proposition \ref{pro:dich}, can be defined and they come in two families:
\begin{enumerate}
\item $s/2$--neighborhoods, in the metric $g_k^\cF$, of the points $x \in I_j \cap F$, for any $j$,
\item $r/2$--neighborhoods, in the metric $g_k^\cF$, of the points $x \in T_i \cap F$, for any $i$.
\end{enumerate}

By construction, the $g_k^\cF$--diameter of the $\mathcal{C}_i$ is bounded above by $r/2$. Further,
the $g_k^\cF$--distance between any two sets $\mathcal{C}_i$ and $\mathcal{C}_{i'}$ is bounded below by $s$.
Therefore conditions (1) and (2) in Proposition \ref{pro:dich} hold. Condition (3) follows immediately from the fact that the union of the $\mathcal{C}_i$ is the intersection of a neighborhood of $\Sigma^\cF(f_k)$ with the leaf $F$.

An application of Lemma \ref{lem:dich} shows that the relative homotopy groups $\pi_j(F, F \cap W_k)$ 
vanish for $j < n$ and for $k$ large enough, since we already did the index computation in Proposition \ref{propo:index}.
\end{proof}

\end{document}